\font\cyr=wncyr8
\font\itcyr=wncyi8
\DeclareMathOperator{\GL}{GL}
\newcommand{\e}{\varepsilon}
\newcommand{\IB}{{\mathbb B}}
\newcommand{\IK}{{\mathbb K}}
\newcommand{\IM}{{\mathbb M}}
\newcommand{\IN}{{\mathbb N}}
\newcommand{\IZ}{{\mathbb Z}}
\newcommand{\IR}{{\mathbb R}}
\newcommand{\IC}{{\mathbb C}}
\newcommand{\cA}{{\mathcal A}}
\newcommand{\cC}{{\mathcal C}}
\newcommand{\cM}{{\mathcal M}}
\newcommand{\cQ}{{\mathcal Q}}
\newcommand{\cU}{{\mathcal U}}
\newcommand{\cCN}{{\mathcal C}{(\IN)}}
\newcommand{\G}{\Gamma}
\DeclareMathOperator{\cspan}{\overline{span}}
\DeclareMathOperator{\dist}{dist}
\DeclareMathOperator{\Ad}{Ad}
\DeclareMathOperator{\bc}{H_b^1}
\newcommand{\tn}[1]{\mathopen{|\hspace*{-1pt}|\hspace*{-1pt}|}#1\mathclose{|\hspace*{-1pt}|\hspace*{-1pt}|}}
\newtheorem{thm}{Theorem}
\newtheorem{lem}[thm]{Lemma}
\newtheorem{prop}[thm]{Proposition}
\newtheorem{defn}[thm]{Definition}
\newproof{proof}{Proof}
\begin{document}

\authorheadline{Y. Choi, I. Farah and N. Ozawa}
\runningtitle{An amenable operator algebra}

\begin{frontmatter}

\title{A nonseparable amenable operator algebra
which is not isomorphic to a $\mathrm{C}^*$-alge\-bra}
\author{Yemon Choi}

\ead{choi@math.usask.ca}

\author{Ilijas Farah}
\ead{ifarah@mathstat.yorku.ca}

\author{Narutaka Ozawa}
\cormark[1]
\cortext[1]{Corresponding author}
\ead{narutaka@kurims.kyoto-u.ac.jp}

\address{Department of Mathematics and Statistics \\
University of Saskatchewan \\
McLean Hall, 106 Wiggins Road \\
Saskatoon, Saskatchewan, Canada S7N 5E6}
\address{Department of Mathematics and Statistics, York University, 4700 Keele Street, North York, Ontario, Canada, M3J 1P3, and Matematicki Institut, Kneza Mihaila 35, Belgrade, Serbia}

\address{RIMS, Kyoto University, \mbox{606-8502} Japan}
\makeatletter
\let\@wraptoccontribs\wraptoccontribs
\makeatother


\MSC{47L30; 46L05}
\date{\today}

\begin{abstract}
It has been a longstanding problem whether every amenable operator algebra is isomorphic
to a (necessarily nuclear) $\mathrm{C}^*$-alge\-bra. In this note, we give a nonseparable
counterexample. The existence of a separable counterexample remains an open problem.
We also initiate a general study of unitarizability of representations of amenable
groups in $\mathrm{C}^*$-algebras
and show that our method cannot produce a separable counterexample.
\end{abstract}
\end{frontmatter}
\section{Introduction}
The notion of amenability for Banach algebras was introduced by B.~E.~Johnson (\cite{johnson})
in 1970s and has been studied intensively since then (see a more recent monograph \cite{runde}).
For several natural classes of Banach algebras, the amenability
property is known to single out the ``good" members of those classes. For example, B.~E.~Johnson's
fundamental observation (\cite{johnson}) is that the Banach algebra $L^1(G)$ of
a locally compact group $G$ is amenable if and only if the group $G$ is amenable.
Another example is the celebrated result of Connes (\cite{connes}) and Haagerup (\cite{haagerup})
which states that a $\mathrm{C}^*$-alge\-bra is amenable as a Banach algebra if and only if it is nuclear.

In this paper, we are interested in the class of \emph{operator algebras}.
By an operator algebra, we mean a (not necessarily self-adjoint)
norm-closed subalgebra of $\IB(H)$, the $\mathrm{C}^*$-alge\-bra of the bounded linear operators
on a Hilbert space~$H$. It has been asked by several researchers whether every amenable
operator algebra is isomorphic to a (necessarily nuclear) $\mathrm{C}^*$-alge\-bra.
The problem has been solved affirmatively in several special cases:
for subalgebras
of commutative $\mathrm{C}^*$-alge\-bras (\cite{sheinberg}),
and subsequently for operator algebras generated by normal elements (\cite{cl});
for subalgebras of compact operators (\cite{willis,gifford});
for $1$-amenable operator algebras (Theorem 7.4.18 in~\cite{bl});
and for commutative subalgebras of finite von Neumann algebras (\cite{choi}).

Here we give the first counterexample to the above problem.
In fact, our counterexample is a subalgebra of the homogeneous
$\mathrm{C}^*$-alge\-bra $\ell_\infty(\IN,\IM_2)$.
Hence the result of \cite{sheinberg} is actually quite sharp and
the result of \cite{choi} does not generalize to
an arbitrary subalgebra of a finite von Neumann algebra.

\begin{thm}\label{thm:main}
There is a unital amenable operator algebra $\cA$
which is not isomorphic to a $\mathrm{C}^*$-alge\-bra.
The algebra $\cA$ is a subalgebra of $\ell_\infty(\IN,\IM_2)$
with density character~$\aleph_1$, and
is an inductive limit of unital separable subalgebras $\{ \cA_i\}_{i<\aleph_1}$,
each of which is conjugated to a $\mathrm{C}^*$-sub\-alge\-bra of $\ell_\infty(\IN,\IM_2)$
by an invertible element $v_i\in\ell_\infty(\IN,\IM_2)$, such that
$\sup_i \|v_i\| \|v_i^{-1}\| <\infty$.
Moreover, for any $\e>0$, one can choose $\cA$ to 
be $(1+\e)$-amenable.
\end{thm}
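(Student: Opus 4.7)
\emph{The plan.} I would realise $\cA$ as a union $\bigcup_{i<\aleph_1}\cA_i$ of an increasing chain of separable unital operator subalgebras of $\ell_\infty(\IN,\IM_2)$, constructed by transfinite recursion, where each $\cA_i$ has the form $v_i \cB_i v_i^{-1}$ for a separable $\mathrm{C}^*$-subalgebra $\cB_i\subset\ell_\infty(\IN,\IM_2)$ and an invertible $v_i\in\ell_\infty(\IN,\IM_2)$ with $\|v_i\|\|v_i^{-1}\|\le 1+\e$. Three things must be engineered in concert: uniform local similarity to $\mathrm{C}^*$-subalgebras; enough global ``spread'' across $\aleph_1$ stages to block any abstract isomorphism of the limit with a $\mathrm{C}^*$-algebra; and an amenability argument that passes through the transfinite limit with uniform control on the amenability constant.

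\emph{Step 1: local building blocks.} Inside $\ell_\infty(\IN,\IM_2)$ I would look at ``perturbed projections'' of the form $vpv^{-1}$, where $p$ is a self-adjoint projection and $v$ is a bounded sequence of $2\times 2$ invertibles whose distortion $\|v\|\|v^{-1}\|$ is as close to $1$ as desired. Finitely many such elements can always be placed inside a single similar-to-$\mathrm{C}^*$ piece of small distortion, so the subalgebras they generate are amenable with uniformly bounded constant. These are the candidate generators of $\cA$.

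\emph{Step 2: transfinite diagonalisation.} Since $\ell_\infty(\IN,\IM_2)$ has density character $\fc\ge\aleph_1$, an enumeration of length $\aleph_1$ of the relevant obstructions (candidate global conjugators, candidate continuous involutions on a countable dense piece, and so on) is available. At stage $i$ of the recursion I would adjoin a new perturbed projection whose local similarity $v_i$ is incompatible with the $i$-th enumerated obstruction, arranging matters so that no single invertible in $\ell_\infty(\IN,\IM_2)$ simultaneously conjugates all of $\bigcup_i\cA_i$ into a $\mathrm{C}^*$-subalgebra, and, more strongly, so that any continuous involution one tries to put on $\cA$ must clash with the involution of $\ell_\infty(\IN,\IM_2)$ on cofinally many of the $\cA_i$.

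\emph{Step 3: amenability.} Every subalgebra of the subhomogeneous $\mathrm{C}^*$-algebra $\ell_\infty(\IN,\IM_2)$ is nuclear and hence $1$-amenable, by Connes--Haagerup. Transporting an approximate diagonal of $\cB_i$ through the similarity $v_i$ shows that $\cA_i$ is $(1+\e)^2$-amenable; a weak-$*$ limit argument for the approximate diagonals along the inductive system then produces an approximate diagonal for $\cA$ with the same norm bound, so $\cA$ is $(1+\e)^2$-amenable. Shrinking the target distortion to $(1+\e)^{1/2}$ at the outset of the construction yields $(1+\e)$-amenability.

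\emph{Step 4 (the main obstacle): ruling out any $\mathrm{C}^*$-algebra structure.} Suppose for contradiction that $\cA$ is Banach algebra isomorphic to some $\mathrm{C}^*$-algebra. The delicate point is to upgrade this purely abstract isomorphism to something concrete inside $\ell_\infty(\IN,\IM_2)$: one would try, via automatic continuity and similarity-type results in the tradition of Christensen, Kadison--Ringrose and Paulsen, to produce an invertible $u\in\ell_\infty(\IN,\IM_2)$ and a $\mathrm{C}^*$-subalgebra $\cB\subset\ell_\infty(\IN,\IM_2)$ with $\cA = u\cB u^{-1}$ --- precisely the situation excluded by Step~2. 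This reduction is the heart of the theorem: the construction in Steps~1--2 is easy to sketch in outline, and the amenability bookkeeping in Step~3 is routine once the local similarities are controlled, but closing the loop in Step~4 --- ensuring that \emph{no} Banach algebra isomorphism with a $\mathrm{C}^*$-algebra exists, not merely no concrete inner similarity --- is where the real work lies, and is what I expect to occupy most of the proof.
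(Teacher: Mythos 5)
The decisive gap is in your Step 2. You propose to defeat all potential global conjugators (and continuous involutions) by enumerating them in order type $\aleph_1$ and diagonalising, but $\ell_\infty(\IN,\IM_2)$ has $2^{\aleph_0}$ invertible elements, and ``density character $\fc\ge\aleph_1$'' cuts the wrong way: an enumeration of these obstructions in length $\aleph_1$ exists only if $\fc=\aleph_1$, i.e.\ under CH, so as written your construction is not a ZFC construction. The paper avoids enumeration altogether: it fixes two order-two matrices $s^0,s^1$ that are not simultaneously unitarizable, takes a Luzin family $\{E_i\}_{i<\aleph_1}$ of almost disjoint infinite subsets of $\IN$ (a ZFC object) whose two halves cannot be \emph{separated} by any single $F\subseteq\IN$, and builds from the projections $Q(1_{E_i^k})$ a uniformly bounded representation $\pi$ of the abelian group $\bigoplus_{i<\aleph_1,\,k\in\{0,1\}}\IZ/2\IZ$ into the corona $\cC(\IN)\otimes\IM_2$. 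Any representing sequence $(v_m)$ of a would-be unitarizer then yields, via $F^k=\{m: d(v_ms^kv_m^{-1},\cU)<\e/2\}$, exactly such a separating set, so \emph{all} conjugators are defeated simultaneously by one combinatorial object rather than one at a time. Your recursion is also silent on limit stages: why the closed union of countably many pieces of the form $v_j\cB_jv_j^{-1}$ is again of that form, with the same distortion bound, is nontrivial; the paper obtains precisely this from countable degree-$1$ saturation of the corona (Theorem~\ref{thm:cd1s}), which unitarizes every countable subrepresentation with $\|v\|\|v^{-1}\|\le\|\pi\|^2$, and this is what makes the $\cA_i$ uniformly similar to $\mathrm{C}^*$-subalgebras.

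You also misjudge where the difficulty lies in Step 4 and leave it open. Passing from an abstract isomorphism of $\cA$ with a $\mathrm{C}^*$-algebra to a concrete similarity inside $\ell_\infty(\IN,\IM_2)$ is not the novel hard part: since $\cA$ is amenable, the target $\mathrm{C}^*$-algebra is nuclear (Connes--Haagerup), and the solved Kadison similarity problem for nuclear $\mathrm{C}^*$-algebras gives an invertible $\tilde v$ in the von Neumann algebra $\ell_\infty(\IN,\IM_2)$ with $\tilde v\cA\tilde v^{-1}$ self-adjoint; in this $2$-homogeneous setting there is even an elementary substitute (coordinatewise averaging over the compact closure of a bounded subgroup of $\GL(2,\IC)$). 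The paper then passes $\tilde v$ to the corona and uses commutativity of the quotient image (the group is abelian) to see that each conjugated $\pi(s)$ is normal with spectrum in the unit circle, hence unitary, so an abstract isomorphism would force unitarizability of $\pi$, contradicting the gap argument above. Finally, your Step 3 claim that every subalgebra of $\ell_\infty(\IN,\IM_2)$ is nuclear and hence $1$-amenable is false (the upper-triangular $2\times2$ matrices already give a non-amenable closed subalgebra); amenability of $\cA$ comes instead from the extension of the amenable algebra $\cspan\pi(\G)$ by $c_0(\IN,\IM_2)$, and the amenability constant $\|\pi\|^4$, made close to $1$ by replacing $s^1$ with $s^1(t)$, from the uniform similarity bound for the $\cA_i$ together with Runde's result on inductive limits with uniformly bounded approximate diagonals.
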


Here, $C$-amenable means that the amenability constant is at most $C$ (see Definition 2.3.15 in \cite{runde}).
One drawback of our counterexample is that it is inevitably nonseparable,
 as explained by Theorem~\ref{thm:cd1s} below,
and the existence of a separable counterexample remains an open problem.
We note that if such an example exists,
 then there is one
among subalgebras of the finite von Neumann algebra
$\prod_{n=1}^\infty\IM_n$. Indeed, by Voiculescu's theorem~(\cite{voiculescu}),
 the cone $C_0((0,1],\cA)$
of a separable operator algebra $\cA$ can be realized as a closed subalgebra of
$\prod_{n=1}^\infty\IM_n/\bigoplus_{n=1}^\infty\IM_n$.
The cone of $\cA$ is amenable (see Exercise 2.3.6 in \cite{runde}),
and its preimage $\tilde{\cA}$ in $\prod_{n=1}^\infty\IM_n$ is
 an extension of the cone by the amenable algebra $\bigoplus_{n=1}^\infty\IM_n$,
 hence $\tilde{\cA}$ is amenable (see Theorem 2.3.10~in \cite{runde}).
$\tilde{\cA}$ is not isomorphic to a $\mathrm{C}^*$-algebra,
since it has $\cA$ as a quotient and every closed two-sided ideal in
a $\mathrm{C}^*$-algebra is automatically $*$-closed.

\subsection*{Acknowledgment.}
This joint work was initiated when the second and third authors participated in
the workshop ``$\mathrm{C}^*$-Alge\-bren" (ID:1335) held at the
Mathe\-mati\-sches Forschungs\-insti\-tut Ober\-wolfach in August 2013.
We are grateful to the organizers S.~Echterhoff, M.~R{\o}rdam, S.~Vaes, and D. Voiculescu,
and the institute for giving the authors an opportunity of a joint work.
We are also grateful to N.~C.~Phillips for useful conversations during the workshop and
helpful remarks on the first version of this paper,
and the third author would like to thank N.~Monod and H.~Matui for
valuable conversations.
Finally, we would like to thank S. A. White for his encouragement to include
the last sentence of
Theorem~\ref{thm:main}.

The first author was supported by NSERC Discovery Grant 402153-2011.
The second author was partially supported by NSERC, a Velux Visiting Professorship, and the Danish Council for Independent Research through Asger T\"ornquist's grant, no. 10-082689/FNU.
The third author was partially supported by JSPS (23540233).

\section{Proof of Theorem~\ref{thm:main}}\label{sec:pfmain}
Let $\cC$ be a unital $\mathrm{C}^*$-alge\-bra, $\G$ be a group,
and $\pi\colon\G\to \cC$ be a representation,
i.e., $\pi(s)$ is invertible for every $s\in\G$
and $\pi(st)=\pi(s)\pi(t)$ for every $s,t\in\G$.
The representation $\pi$ is said to be \emph{uniformly bounded} if
$\|\pi\|:=\sup_s\|\pi(s)\|<+\infty$.
It is said to be \emph{unitarizable} if
there is an invertible element $v$ in $\cC$ such that $\Ad_v\circ\pi$
is a unitary representation. Here $\Ad_v(c)=vcv^{-1}$ for $c\in\cC$.
The element $v$ is called a \emph{similarity} element.
A well-known theorem of Sz.-Nagy, Day, Dixmier, and Nakamura--Takeda
states that every uniformly bounded representation
of an amenable group $\G$ into a von Neumann algebra is unitarizable.
In fact the latter property characterizes amenability by
Pisier's theorem (\cite{pisier07}).
In particular, the operator algebra $\cspan\pi(\G)$ generated by
a uniformly bounded representation $\pi$ of an amenable group $\G$
is an amenable operator algebra which is isomorphic to a nuclear $\mathrm{C}^*$-alge\-bra.
See \cite{pisier} and \cite{runde} for general information about uniformly bounded representations
and amenable Banach algebras, respectively.

Let us fix the notation. Let $\IM_2$ be the $2$-by-$2$ full matrix algebra,
$\ell_\infty(\IN,\IM_2)$ be the $\mathrm{C}^*$-alge\-bra of the bounded sequences in $\IM_2$,
and $c_0(\IN,\IM_2)$ be the ideal of the sequences that converge to zero.
We shall freely identify $\ell_\infty(\IN,\IM_2)$ with
$\ell_\infty(\IN)\otimes\IM_2$,
and $\ell_\infty(\IN,\IM_2)/c_0(\IN,\IM_2)$ with $\cC(\IN)\otimes\IM_2$,
where $\cC(\IN)=\ell_\infty(\IN)/c_0(\IN)$.
The quotient map from $\ell_\infty(\IN)$ (or $\ell_\infty(\IN)\otimes\IM_2$)
onto $\cC(\IN)$ (or $\cC(\IN)\otimes\IM_2$) is denoted by $Q$.

\begin{lem}\label{lem:ext}
Let $\G$ be an abelian group and $\pi\colon\G\to\cC(\IN)\otimes\IM_2$
be a uniformly bounded representation. Then the amenable operator algebra
\[
\cA:=Q^{-1}(\cspan\pi(\G))\subset\ell_\infty(\IN,\IM_2)
\]
is isomorphic to a $\mathrm{C}^*$-alge\-bra if and only if $\pi$ is unitarizable.
\end{lem}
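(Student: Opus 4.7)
The plan is to prove the two implications separately. The $(\Leftarrow)$ direction is mostly bookkeeping; the real content is in $(\Rightarrow)$, where we must upgrade an abstract Banach-algebra isomorphism to a spatial similarity inside $\ell_\infty(\IN,\IM_2)$.

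For $(\Leftarrow)$, suppose $\pi$ is unitarizable via $v\in\cC(\IN)\otimes\IM_2$. First lift $v$ to an invertible $V\in\ell_\infty(\IN,\IM_2)$: any bounded lift of $v$ has all but finitely many $2\times 2$ entries uniformly invertible (because $v$ is invertible modulo $c_0$), and replacing the remaining singular entries by $1_{\IM_2}$ produces an invertible $V$ with $Q(V)=v$. The intertwining $Q\circ\Ad_V=\Ad_v\circ Q$ then gives $\Ad_V(\cA)=Q^{-1}\bigl(\Ad_v(\cspan\pi(\G))\bigr)$, which is the preimage under the $*$-homomorphism $Q$ of a commutative C*-subalgebra of $\cC(\IN)\otimes\IM_2$ (generated by the commuting unitaries $(\Ad_v\circ\pi)(\G)$). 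Hence $\Ad_V(\cA)$ is itself a C*-subalgebra of $\ell_\infty(\IN,\IM_2)$, isomorphic to $\cA$ via $\Ad_V$.

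For $(\Rightarrow)$, the aim is to produce an invertible $W\in\ell_\infty(\IN,\IM_2)$ such that $\Ad_W(\cA)$ is a C*-subalgebra. Granted this, set $v:=Q(W)$. Since $Q$ is a $*$-homomorphism, $\Ad_v(\cspan\pi(\G))=Q(\Ad_W(\cA))$ is a C*-subalgebra of $\cC(\IN)\otimes\IM_2$, commutative because $\G$ is abelian. In a commutative unital C*-algebra $C(X)$, any invertible $a$ with $\sup_{n\in\IZ}\|a^n\|<\infty$ must be unitary: its Gelfand transform $\hat a$ satisfies $|\hat a(x)|^n$ bounded in $n\in\IZ$, forcing $|\hat a(x)|=1$ for every $x\in X$. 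Applying this with $a=\Ad_v(\pi(s))$, whose integer powers are $\Ad_v(\pi(s^n))$ and are uniformly bounded because $\pi$ is, shows that $\Ad_v\circ\pi$ is a unitary representation, so $\pi$ is unitarizable.

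The main obstacle is producing the similarity $W$ inside $\ell_\infty(\IN,\IM_2)$ itself. Let $\Phi\colon\cA\to\cB$ be the given isomorphism with $\cB$ a C*-algebra. Then $\cB$ is amenable (amenability passes across bounded Banach-algebra isomorphisms), hence nuclear by Connes--Haagerup, so Haagerup's solution of Kadison's similarity problem in the nuclear case makes the bounded unital homomorphism $\Phi^{-1}\colon\cB\to\ell_\infty(\IN,\IM_2)$ completely bounded. To realize the similarity inside $\ell_\infty(\IN,\IM_2)$ rather than in the strictly larger algebra $\cB(\oplus_n\IC^2)$, I would apply Paulsen's similarity theorem fibrewise to each coordinate map $\Phi^{-1}_n\colon\cB\to\IM_2$ (obtained by composing $\Phi^{-1}$ with evaluation at $n$), noting that $\|\Phi^{-1}_n\|_{\mathrm{cb}}\le\|\Phi^{-1}\|_{\mathrm{cb}}$. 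Paulsen then furnishes invertibles $W_n\in\IM_2$ implementing $\Phi^{-1}_n$ as a $*$-homomorphism, with $\|W_n\|\,\|W_n^{-1}\|$ controlled uniformly by $\|\Phi^{-1}\|_{\mathrm{cb}}$; the assembled element $W=(W_n)_n$ is the required invertible in $\ell_\infty(\IN,\IM_2)$. The delicate point is precisely this uniform control of the fibrewise similarities, which hinges on the explicit similarity bound in Paulsen's theorem combined with the Haagerup complete-boundedness input.
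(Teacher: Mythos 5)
Your argument is correct in substance and follows the same overall skeleton as the paper's proof, but the crucial step of the forward direction is handled by a different (and legitimate) route. Where you obtain complete boundedness of $\Phi^{-1}$ from Haagerup and then apply Paulsen's similarity theorem coordinatewise to the maps $\Phi^{-1}_n\colon\cB\to\IM_2$, reassembling the uniformly controlled $2\times 2$ similarities into an element of $\ell_\infty(\IN,\IM_2)$, the paper simply invokes the solution of Kadison's similarity problem for nuclear $\mathrm{C}^*$-algebras in the form that produces the similarity element directly inside the von Neumann algebra $\ell_\infty(\IN,\IM_2)$ (Theorem~7.16 in \cite{pisier}), and then runs the same ``normal with spectrum in the unit circle, hence unitary'' argument that you phrase via the Gelfand transform. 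Your fibrewise strategy is essentially a hands-on proof of that refinement in this special setting, and it closely parallels the remark the paper makes right after the lemma, where the coordinatewise unitarization is obtained even more cheaply by restricting to the unitary group and averaging over a compact subgroup of $\GL(2,\IC)$ in each coordinate. In the backward direction your lift of $v$ (repairing finitely many coordinates of an arbitrary bounded lift) is fine for the lemma; the paper's more careful positive lift with $\|\tilde v\|\,\|\tilde v^{-1}\|=\|v\|\,\|v^{-1}\|$ is only needed later for the quantitative claims in Theorem~\ref{thm:main}.

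Two small patches are needed. First, you use amenability of $\cA$ (to conclude that $\cB$ is amenable, hence nuclear), but this is itself part of what must be checked; the paper does it in one line: $\cspan\pi(\G)$ is amenable (it is the closure of the range of a bounded homomorphism from $\ell_1(\G)$ with $\G$ abelian), and $\cA$ is an extension of it by the amenable ideal $c_0(\IN,\IM_2)$, hence amenable by Theorem~2.3.10 in \cite{runde}. Second, the bound $\|W_n\|\,\|W_n^{-1}\|\le\|\Phi^{-1}\|_{\mathrm{cb}}$ on the condition numbers does not by itself make $W=(W_n)_n$ and $W^{-1}=(W_n^{-1})_n$ bounded sequences; you must normalize, e.g.\ replace $W_n$ by $W_n/\|W_n\|$, so that $\|W_n\|=1$ and $\|W_n^{-1}\|\le\|\Phi^{-1}\|_{\mathrm{cb}}$ for all $n$. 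With that normalization $W$ is genuinely invertible in $\ell_\infty(\IN,\IM_2)$, the coordinatewise $*$-homomorphisms assemble into a $*$-homomorphism, and the rest of your argument goes through.
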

\begin{proof}
First of all, we observe that the operator algebra $\cA$ is indeed amenable because
it is an extension of an amenable Banach algebra $\cspan\pi(\G)$ by the
amenable Banach algebra $c_0(\IN,\IM_2)$ (see Theorem~2.3.10 in \cite{runde}).
Suppose now that $\pi$ is unitarizable and $v\in\cC(\IN)\otimes\IM_2$ has the property
$\Ad_v\circ\pi$ is unitary.
We may assume $v$ is positive, by taking the positive component from its polar decomposition.
Since $v$ is invertible, we can choose
 a representing sequence $v_m$, for $m\in \IN$ of $v$ such that each $v_m$ is positive and
moreover $1/\|v^{-1}\|\leq v_m \leq \|v\|$ for all $m$. In particular each $v_m$ is invertible and
$\|v_m\|\|v_m^{-1}\|\leq \|v\| \|v^{-1}\|$
for all $m$.
Now we have a representing sequence of an  invertible lift $\tilde{v}\in\ell_\infty(\IN,\IM_2)$
of $v$ such that $\|\tilde{v}\| \|\tilde{v}^{-1}\|=\| v\| \|v^{-1}\|$.
Then $\tilde{v}\cA\tilde{v}^{-1}=Q^{-1}(\cspan(\Ad_v\circ\pi(\G)))$
is a self-adjoint $\mathrm{C}^*$-sub\-alge\-bra of $\ell_\infty(\IN,\IM_2)$. Conversely, suppose that
$\cA$ is isomorphic to a $\mathrm{C}^*$-alge\-bra, which is necessarily nuclear.
Then thanks to the solution of Kadison's similarity problem for
nuclear $\mathrm{C}^*$-alge\-bras (see Theorem~7.16 in \cite{pisier} or Theorem~1 in \cite{pisier07}),
there is $\tilde{v}$ in the von Neumann algebra $\ell_\infty(\IN,\IM_2)$
such that $\tilde{v}\cA\tilde{v}^{-1}$ is a $\mathrm{C}^*$-sub\-alge\-bra.
Let $v=Q(\tilde{v})\in\cC(\IN)\otimes\IM_2$.
Since $Q(\tilde{v}\cA\tilde{v}^{-1})$ is a commutative $\mathrm{C}^*$-sub\-alge\-bra of $\cC(\IN)\otimes\IM_2$,
for every $s \in \G$, the element $v\pi(s)v^{-1}$ is normal with its spectrum in the unit circle,
which implies that $v\pi(s)v^{-1}$ is unitary.
\end{proof}

The above proof uses the fact that   every (not necessarily separable)
 amenable C*-algebra is nuclear, as well as  the
solution to Kadison's similarity problem for nuclear C*-algebras.
The reader may appreciate  a more elementary and
self-contained proof.
 Assume $\theta$ is a bounded homomorphism of a unital C*-algebra $\cA$ into $\ell_\infty(\IN,\IM_2)$.
 We need to prove that $\theta$ is similar to a *-homomorphism.
It  suffices to show that every coordinate map is similar to a *-homomorphism
and that the similarities are implemented by a uniformly bounded sequence $v_n$, for $n\in \IN$,
of operators.
Consider the restriction of $\theta$ to the unitary group $G$ of~$\cA$.
At the $n$-th coordinate we have a bounded homomorphism from $G$ to $\GL(2,\IC)$.
Since a bounded subgroup of $\GL(2,\IC)$ is included in a compact subgroup,
by a standard averaging argument we  find~$v_n$ such that $\Ad_{v_n}\circ\theta$ is a unitary representation of $G$. The operators $v_n$ are easily seen to satisfy the required properties.


\begin{proof}[of Theorem~$\ref{thm:main}$]
We consider two $2$-by-$2$ order $2$ invertible matrices which are not simultaneously unitarizable.
For instance, let
$s^0=\left[\begin{smallmatrix}1&0\\0&-1\end{smallmatrix}\right]$
and
$s^1=\left[\begin{smallmatrix}1&0\\1&-1\end{smallmatrix}\right]$.
Then by compactness, one has
\[
\e(C):=\inf\{ d(v s^0 v^{-1},\cU)+d(v s^1 v^{-1},\cU) : v\in\IM_2^{-1},\ \|v\|\|v^{-1}\|\le C\} > 0
\]
for every $C>0$. Here $\cU$ denotes the unitary group of $\IM_2$.

We shall need two families $\{E_i^0: i\in \aleph_1\}$ and $\{E_i^1: i\in \aleph_1\}$
of subsets of $\IN$ such that (i) $E_i^k\cap E_j^l$ is finite whenever $(i,k)\neq (j,l)$
and (ii) these two families are not \emph{separated},
in the sense that there is no $F\subseteq \IN$ such that both $E_i^0\setminus F$
and $E_i^1\cap F$ are finite for all $i$.
The existence of such pair of families follows from
  \cite{Lu47}. Luzin actually proved much more: he constructed a single family
    $\{E_i: i<\aleph_1\}$
of infinite subsets of $\IN$ such that (i)
 $E_i\cap E_j$ is finite whenever $i\neq j$ and (ii) whenever $X\subseteq \aleph_1$
 is such that both $X$ and $\aleph_1\setminus X$ are uncountable, then
 the families $\{E_i: i\in X\}$ and $\{E_i: i\in \aleph_1\setminus X\}$ cannot be separated
 (see 
 \ref{sec:luzin} below for Luzin's proof).

 The projections $p_i^k=Q(1_{E_i^k})\in\cCN$ are mutually orthogonal.
For each pair $(i,k)$, we define $s_i^k$  in $\cCN\otimes\IM_2$ by
\[
s_i^k = p_i^k \otimes s^k + (1-p_i^k) \otimes 1.
\]
Let $\G:=\bigoplus_{i\in \aleph_1, k\in \{0,1\}}\IZ/2\IZ$ and $\{e_i^k\}$ be its standard basis.
Then the map
$e_i^k\mapsto s_i^k$
extends to a uniformly bounded representation $\pi\colon \G\to\cCN\otimes\IM_2$ such that
$\|\pi\|=\max\{\|s^0\|,\|s^1\|\}$.
We claim that $\pi$ is not unitarizable.
Suppose for a contradiction that there is
an invertible element $v\in\cCN\otimes\IM_2$ such that
$\Ad_{v}\circ\pi$ is unitary. As in the proof of Lemma~\ref{lem:ext}
we may assume $v$ is positive
and find a representing sequence $v_m$, for $m\in \IN$,
of an invertible lift of $v$ such that  $\|v_m\|\|v_m^{-1}\|\leq \|v\|\|v^{-1}\|$ for all $m$.
Let $\e=\e(\|v\|\|v^{-1}\|)$.

Now let $F^0:=\{m: d(v_ms^0v_m^{-1},\cU)<\e/2\}$, and note that this
set is disjoint from $F^1:=\{m: d(v_m s^1 v_m^{-1},\cU)<\e/2\}$.
Therefore we have $i$ such that $E_i^0\setminus F^0$ is infinite or
such that $E_i^1\setminus F^1$ is infinite.
If the former case applies, then
\[
\limsup_{n\in E_i^0,\, n\to\infty} d(v_ns^0v_n^{-1},\cU) \geq  \e/2,
\]
contradicting the assumption that $v$ unitarizes $\pi$.
The  case when $E_i^1\setminus F_1$ is infinite similarly leads to a contradiction.
Thus, by Lemma~\ref{lem:ext}, the preimage of $\cspan\pi(\G)$
in $\ell_\infty(\IN,\IM_2)$ is an amenable operator algebra which is not isomorphic
to a $\mathrm{C}^*$-alge\-bra. Its density character is equal to $\aleph_1=|\G|$.


Let $\Gamma_i$ be a countable subgroup of $\Gamma$ and denote the separable algebra $Q^{-1}(\cspan\pi(\Gamma_i))$ by $\cA_i$.
Theorem~\ref{thm:cd1s} below shows that $\cA_i$ is similar inside $\ell_\infty(\IN,\IM_2)$ to an amenable $\mathrm{C}^*$-algebra,
with a similarity element $v_i$ satisfying $\|v_i\| \|v_i^{-1}\| \leq \|\pi\|^2$.
Furthermore, since every amenable $\mathrm{C}^*$-algebra is $1$-amenable by results of Haagerup~(\cite{haagerup}),
 $\cA_i$ is $\|\pi\|^4$-amenable.
Now $\cA$ is the inductive limit of the family $(\cA_i)$ as $\Gamma_i$ varies over all countable subgroups of $\Gamma$. Since each $\cA_i$ is $\|\pi\|^4$-amenable, a routine argument with approximate diagonals shows that $\cA$ is also $\|\pi\|^4$-amenable: for details see Proposition 2.3.17 in~\cite{runde}.

Finally, we explain how our example can be modified to have arbitrarily small amenability constant.
For $0<t<1$, we keep $s^0= \left[\begin{smallmatrix}1&0\\0&-1\end{smallmatrix}\right]$ but replace $s^1$
with $s^1(t) = \left[\begin{smallmatrix}1&0\\t&-1\end{smallmatrix}\right]$ in our original construction.
Denoting the resulting algebra by~$\cA(t)$, the previous arguments show that $\cA(t)$ is $\|s^1(t)\|^4$-amenable, and $\|s^1(t)\|$
can be made arbitarily close to $1$.
\end{proof}

We note that a set-theoretical study of the cohomological nature of gaps similar to Luzin's
was initiated in \cite{Talayco}.

\section{Unitarizability of uniformly bounded representations}
In this section, we develop a general study of (non-)unitarizability.
First, we shall deal with separable $\mathrm{C}^*$-alge\-bras.
Let $\cA$ be a unital $\mathrm{C}^*$-alge\-bra and $\theta$ be a $*$-auto\-mor\-phism on $\cA$.
An element $a\in\cA$ is called a \emph{cocycle} if it satisfies
\[
\tn{a}:=\sup_{n\geq1}\|\sum_{k=0}^{n-1}\theta^k(a)\|<+\infty.
\]
It is \emph{inner} (or a coboundary) if there is $x\in\cA$ such that $a=x-\theta(x)$.
We recall that the \emph{first bounded cohomology
group} (see \cite{monod2001continuous}) of the $\IZ$-module
$(\cA,\theta)$ is defined as
\[
\bc(\cA,\theta)=\{\mbox{ cocycles }\}/\{\mbox{ inner cocycles }\}.
\]
When $\cA$ is abelian and $\theta$ corresponds to a minimal homeomorphism
of its spectrum then  $\bc$ is trivial (see Theorem~2.6 in~\cite{ormes}).

We note that every cocycle is approximately inner.
Indeed, since $a_n:=\sum_{k=0}^{n-1}\theta^k(a)$ satisfies $a_{n+1}=a+\theta(a_n)$,
the element $x_n:= n^{-1}\sum_{m=1}^n a_m$ satisfies $\|x_n\|\le\tn{a}$ and
$\| a-(x_n-\theta(x_n)) \| \le 2n^{-1}\tn{a}$.
Suppose for a moment that $\theta$ is inner, $\theta=\Ad_u$ for a unitary element $u\in\cA$,
and $a\in\cA$ is a cocycle.
Then, $t=\left[\begin{smallmatrix} u & au\\ 0 & u\end{smallmatrix}\right]$ is an invertible
element in $\IM_2(\cA)$ such that
$t^n=\left[\begin{smallmatrix} u^n & a_nu^n\\ 0 & u^n\end{smallmatrix}\right]$
for $n\geq1$.
Therefore $\sup_{n\in\IZ}\|t^n\|\le1+\tn{a}$ and
$t$ gives rise to a uniformly bounded representation $\pi_a$ of $\IZ$ into $\cA$.
\begin{lem}\label{lem:der}
Let $\cA$, $u$, $a$, and $\pi_a$ as above.
Then the uniformly bounded representation~$\pi_a$ is
unitarizable if and only if $a$ is inner.
\end{lem}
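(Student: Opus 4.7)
The plan is to reduce the statement to a block-matrix calculation in $\IM_2(\cA)$. Since $\pi_a$ is generated by the single element $t=\pi_a(1)$, unitarizability amounts to finding an invertible $v\in\IM_2(\cA)$ with $vtv^{-1}$ unitary; and in the opposite direction, since $vtv^{-1}$ unitary automatically implies $v t^n v^{-1}$ unitary for all $n\in\IZ$, it is enough to work with this single conjugation.

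For the easy direction, suppose $a=x-\theta(x)$ with $x\in\cA$. Set
\[
v=\begin{pmatrix} 1 & -x \\ 0 & 1 \end{pmatrix}\in\IM_2(\cA),\qquad v^{-1}=\begin{pmatrix} 1 & x \\ 0 & 1 \end{pmatrix}.
\]
A direct computation, using $\theta(x)=uxu^{-1}$, gives
\[
vtv^{-1}=\begin{pmatrix} u & ux+au-xu \\ 0 & u \end{pmatrix}=\begin{pmatrix} u & 0 \\ 0 & u \end{pmatrix},
\]
which is unitary since $u$ is. This handles the "inner $\Rightarrow$ unitarizable" implication.

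For the harder direction, assume $w:=vtv^{-1}$ is unitary for some invertible $v\in\IM_2(\cA)$. Set $h:=v^*v$, a positive invertible element of $\IM_2(\cA)$. The identity $w^*w=1$ rearranges as $t^*ht=h$. Write $h=\left(\begin{smallmatrix} h_{11} & h_{12} \\ h_{12}^* & h_{22}\end{smallmatrix}\right)$ with $h_{11}=h_{11}^*$ and expand $t^*ht=h$ entrywise. The $(1,1)$-entry reads $u^*h_{11}u=h_{11}$, so $h_{11}$ commutes with $u$, equivalently $\theta(h_{11})=h_{11}$. The $(1,2)$-entry reads $u^*h_{11}au+u^*h_{12}u=h_{12}$; conjugating by $u$ on the left/right yields
\[
h_{11}\,a=\theta(h_{12})-h_{12}.
\]
Since $h$ is positive invertible, compression by the projection $e_{11}\otimes 1$ shows $h_{11}\ge\varepsilon\cdot 1_\cA$ for some $\varepsilon>0$, so $h_{11}$ is invertible in $\cA$. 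Using that $h_{11}^{-1}$ also commutes with $u$, we rewrite
\[
a=h_{11}^{-1}\bigl(\theta(h_{12})-h_{12}\bigr)=\theta(h_{11}^{-1}h_{12})-h_{11}^{-1}h_{12}.
\]
Setting $x:=-h_{11}^{-1}h_{12}$ gives $a=x-\theta(x)$, so $a$ is inner.

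The only real content is the $(1,2)$-entry of $t^*ht=h$ and the remark that $h_{11}^{-1}$ is available and $\theta$-invariant; the rest is bookkeeping, and the $(2,2)$-entry is not needed. The main (modest) obstacle is checking invertibility of $h_{11}$, which is a standard compression argument for positive invertible elements of a $2\times 2$ matrix algebra over $\cA$.
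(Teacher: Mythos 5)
Your proof is correct: both directions check out (the conjugation by $\left(\begin{smallmatrix}1&-x\\0&1\end{smallmatrix}\right)$ diagonalizes $t$ when $a=x-\theta(x)$, and in the converse the relation $t^*ht=h$ for $h=v^*v$, together with invertibility and $\theta$-invariance of the corner $h_{11}$, yields $a=x-\theta(x)$ with $x=-h_{11}^{-1}h_{12}$). The paper does not prove this lemma itself but defers to Lemma~4.5 of Pisier's book and to Monod--Ozawa, and your block-matrix argument with $h=v^*v$ is essentially that standard proof, so your write-up simply supplies the omitted details.
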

See Lemma~4.5 in \cite{pisier} or \cite{mo} for the proof of this lemma.

\begin{prop}\label{prop:nonzerobc}
Let $\cA$ be a unital separable $\mathrm{C}^*$-alge\-bra and $\theta$ be a $*$-automorphism of $\cA$.
Suppose that there are a (non-unital) $\theta$-invariant $\mathrm{C}^*$-sub\-alge\-bra $\cA_0$,
a state~$\phi$ on $\cA_0$, and a sequence of natural numbers $n(k)$ such that
$(\phi\circ\theta^{n(k)})_{k=1}^\infty$ converges to~$0$ pointwise on $\cA_0$.
Then, $\bc(\cA,\theta)\neq0$.
\end{prop}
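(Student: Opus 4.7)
My plan is to construct an explicit cocycle $a\in\cA_0$ whose partial sums $a_n=\sum_{j<n}\theta^j(a)$ are uniformly bounded yet which cannot be realised as $x-\theta(x)$ for any $x\in\cA$. The guiding picture is the abelian prototype $\cA = C(\IN\cup\{\infty\})$, $\cA_0 = c_0(\IN)$, $\theta$ the shift, $\phi = \delta_0$, in which the cocycle $a = \sum_m(-1)^m|B_m|^{-1}\chi_{B_m}$ with $B_m = [2^m,2^{m+1})$ has partial sums that oscillate between $0$ and $1$; no coboundary lift exists there because continuity at $\infty$ would force those partial sums to converge. I want to engineer a C*-algebraic analogue of this obstruction using the dissipation hypothesis $\phi\circ\theta^{n(k)}\to 0$.

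After rescaling $\phi$, I would choose a positive contraction $b\in\cA_0$ with $\phi(b)=1$ and pass to a subsequence (still called $n(k)$) with $\phi(\theta^{n(k)}(b))<2^{-k}$ and gaps $n(k+1)-n(k)$ as large as required by later estimates. The cocycle is then built as a sign-alternating, norm-summable series of pieces of $b$ shifted deep inside the blocks $[n(k), n(k+1))$, in analogy with the abelian construction, of the general shape
\[
a = \sum_k (-1)^k \alpha_k\bigl(\theta^{p_k}(b) - \theta^{q_k}(b)\bigr), \qquad p_k < q_k \in [n(k), n(k+1)),
\]
with weights $\alpha_k$ tuned so that (i) $\sup_n \|a_n\|<\infty$, verified by a block-by-block norm estimate in which the large gaps isolate the contributions of different $k$ and the alternating signs prevent accumulation, and (ii) the sequence $\phi(a_{n(k)})$ has two distinct accumulation points $L_0\ne L_1$ along the even and odd subsequences of $k$, mirroring the $0,1,0,1,\dots$ oscillation of the abelian model.

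To rule out $a=x-\theta(x)$ with $x\in\cA$, I would extend $\phi$ to a state $\tilde\phi$ on $\cA$ by Hahn--Banach. Since $a_n\in\cA_0$, the identity
\[
\phi(a_{n(k)}) + \tilde\phi(\theta^{n(k)}(x)) = \tilde\phi(x)
\]
forces the two sequences to oscillate in lockstep. By separability of $\cA$ and weak-$*$ compactness of its state space, I can extract from $\tilde\phi\circ\theta^{n(k)}$ weak-$*$ convergent subsequences whose limit states $\psi$ vanish on $\cA_0$ by the hypothesis. Extracting separately along the two parity subsequences yields states $\psi_0,\psi_1$ on $\cA$, both vanishing on $\cA_0$, with $L_i = \tilde\phi(x) - \psi_i(x)$.

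The main obstacle is closing this into a contradiction: a priori two different states on $\cA$ vanishing on $\cA_0$ can differ on $x\in\cA$, so the oscillation of $\phi(a_{n(k)})$ alone is insufficient. I expect the resolution to go through either a decomposition of $x$ modulo a $\theta$-invariant piece (an analogue of $x\mapsto x - x(\infty)$ in the abelian case), which would reduce to $x\in\cA_0$ and then directly give $\phi(a_{n(k)}) = \phi(x) - \phi(\theta^{n(k)}(x))\to \phi(x)$, a single value; or through a Cauchy--Schwarz/positivity argument on $\tilde\phi\circ\theta^{n(k)}$ applied to $\theta^{n(k)}(x^*x)$, combined with the rapid dissipation $\phi(\theta^{n(k)}(b))<2^{-k}$, to force $\psi_0(x) = \psi_1(x)$. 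This quantitative finish, rather than the construction of $a$ itself or the partial-sum bound, is where I expect the technical heart of the proof to lie.
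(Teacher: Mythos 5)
Your overall strategy (pair an oscillating cocycle against the dissipating functionals $\phi\circ\theta^{n(k)}$ and contradict innerness) is the right one, but the step you yourself flag as the ``technical heart'' is a genuine gap, and neither of your proposed fixes can close it. A reduction of $x$ modulo a $\theta$-invariant piece to an element of $\cA_0$ has no analogue in a general $\mathrm{C}^*$-algebra, and no Cauchy--Schwarz argument can force $\psi_0(x)=\psi_1(x)$: distinct weak-$*$ limit states that vanish on $\cA_0$ really can disagree at $x$ (already for the compacts inside the bilateral-shift example). The actual resolution is a switch of quantifiers which the paper performs \emph{before} building the cocycle: extend $\phi$ to a state on $\cA$ and, using separability of $\cA$ and weak-$*$ compactness, pass to a subsequence of $n(k)$ along which $\phi\circ\theta^{n(k)}$ converges pointwise on \emph{all} of $\cA$ to a single state $\psi$. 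Then, if $a=x-\theta(x)$ with $x\in\cA$, one gets $\phi(a_{n(k)})=\phi(x)-\phi(\theta^{n(k)}(x))\to\phi(x)-\psi(x)$, a single limit; so it suffices to arrange the oscillation of $\phi(a_{n(k)})$ along this refined subsequence, and the ambiguity between parity classes never arises. (In the paper the element playing the role of $x$ lies in $\cA^{**}$, and innerness $a=y-\theta(y)$, $y\in\cA$, yields a $\theta$-fixed element $x-y$ of $\cA^{**}$, so $\phi^{k(j)}(x)=\phi(x-y)+\phi^{k(j)}(y)$ converges --- the same contradiction.)

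The construction of the cocycle itself is also not secured by your outline. The hypothesis gives only weak-$*$ dissipation of one state along a subsequence; it gives no norm control on sums of translates $\theta^j(b)$, which need not be anywhere near orthogonal, so the partial sums generated by $\theta^{p_k}(b)-\theta^{q_k}(b)$ can grow like $q_k-p_k$ and your ``block-by-block'' bound on $\sup_n\|a_n\|$ does not follow; the near-disjointness you are importing from the abelian shift model is exactly what is missing in general. The paper's substitute is the device your proposal lacks: by a Hahn--Banach convexity argument one chooses an approximate unit $(h_n)$ of $\cA_0$ with $h_{n+1}h_n=h_n$ and $\|h_n-\theta(h_n)\|<2^{-n}$. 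Since $\theta$ restricts to an automorphism of $\cA_0$, each $\phi\circ\theta^{n(k)}$ is a state on $\cA_0$, so $\phi^{k}(h_m)\to1$ in $m$, and together with the dissipation hypothesis one interleaves indices $m(j),k(j)$ so that $x=\sum_j(h_{m(2j)}-h_{m(2j-1)})$ is a positive contraction in $\cA^{**}$ (the summands are pairwise orthogonal thanks to $h_{n+1}h_n=h_n$), while $a:=x-\theta(x)$ is a norm-convergent series in $\cA_0$ and $a_n=x-\theta^n(x)$ telescopes, giving $\tn{a}\le2$ with no further estimate; the oscillation of $\phi^{k(j)}(x)$ between values near $1$ and values at most $1/4$ is built in by the interleaving. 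Without something of this kind, conditions (i) and (ii) of your construction are not simultaneously achievable by the stated means.
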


\begin{proof}
By a standard Hahn--Banach convexity argument, we construct
 an approximate unit $(h_n)_{n=0}^\infty$ of $\cA_0$ such that
$0\leq h_n\leq 1$, $h_{n+1}h_n=h_n$, and $\|h_n-\theta(h_n)\|<2^{-n}$ for all $n$.
We note that $\phi'(h_n)\to1$ for any state $\phi'$ on $\cA_0$.
Taking a state extension, we may assume that $\phi$ is defined on $\cA$.
Since $\cA$ is separable, passing to a subsequence, we may assume that
$\phi^k:=\phi\circ\theta^{n(k)}$ converges pointwise to a state, say $\psi$, on $\cA$.

Set $k(1)=1$. By induction, one can find strictly increasing
sequences $(m(j))_{j=1}^\infty$ and $(k(j))_{j=1}^\infty$ of natural numbers
such that
$\phi^{k(i)}(h_{m(j)})>1-2^{-j}$ for every $i\le j$ and
$\phi^{k(j+1)}(h_{m(j)})<2^{-j}$ for every $j$.
Let
\[
x=\mbox{SOT-}\sum_{j=1}^\infty(h_{m(2j)}-h_{m(2j-1)})\in \cA^{**}.
\]
We extend $\theta$ and $\phi$ on $\cA^{**}$ by ultraweak continuity.
One has $a:=x-\theta(x)\in \cA$, since it is a norm-convergent series in $\cA_0$.
By a telescoping argument,  $a$ is a cocycle.

Suppose for the sake of obtaining a contradiction that $a$ is inner
and $x-\theta(x)=y-\theta(y)$ for some $y\in\cA$.
Then, $y\in\cA$ and $\theta(x-y)=x-y$. It follows that
$\phi^{k(j)}(y)\to\psi(y)$ and $\phi^{k(j)}(x-y)=\phi(x-y)$.
Hence the sequence $( \phi^{k(j)}(x))_{j=1}^\infty$ converges.
However, for $j\geq1$,
\[
\phi^{k(2j)}(x) \geq \phi^{k(2j)}(h_{m(2j)} - h_{m(2j-1)})
\geq 1-\frac{1}{2^{2j}}-\frac{1}{2^{2j-1}}
\]
and
\begin{align*}
\phi^{k(2j+1)}(x)
\le \phi^{k(2j+1)}(\sum_{i=1}^{j} h_{m(2i)})
+\sum_{i=j+1}^{\infty}(1-\phi^{k(2j+1)}( h_{m(2i-1)}))
\le\frac{1}{4}.
\end{align*}
Hence, the sequence $( \phi^{k(j)}(x))_{j=1}^\infty$ does not converge, and we have a contradiction.
\end{proof}

An example of  $\cA_0$, $\phi$ and $\theta$ as in the statement of Proposition~\ref{prop:nonzerobc}
are the ideal~$\IK$   of compact operators on $\IB(\ell_2(\IZ))$, any one of its states, and the bilateral
shift on $\ell_2(\IZ)$.

\begin{lem}\label{lem:unbc}
For every unital separable $\mathrm{C}^*$-alge\-bra $\cA$ which is not of type $\mathrm{I}$,
there is a unitary element $u\in\cA$ such that $\bc(\cA,\Ad_u)\neq0$.
\end{lem}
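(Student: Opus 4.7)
The plan is to apply Proposition~\ref{prop:nonzerobc} with $\theta=\Ad_u$ for a well-chosen unitary $u\in\cA$, so the task reduces to producing such a $u$ together with a non-unital $\Ad_u$-invariant sub-C*-algebra $\cA_0\subseteq\cA$, a state $\phi$ on $\cA_0$, and a strictly increasing sequence $(n(k))_{k\ge1}$ along which $\phi\circ\Ad_u^{n(k)}\to 0$ pointwise on $\cA_0$. The example displayed just after Proposition~\ref{prop:nonzerobc} serves as the guiding model: the aim is to import the picture of the bilateral shift on $\IB(\ell_2(\IZ))$ mixing the ideal $\IK$ of compact operators to zero into the ambient algebra $\cA$.

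Since $\cA$ is separable and not of type I, Glimm's dichotomy produces a sub-C*-algebra $\cB\subseteq\cA$ together with a closed two-sided ideal $\cJ\triangleleft\cB$ such that $\cB/\cJ$ is isomorphic to the UHF algebra $\cM_{2^\infty}=\bigotimes_{n\in\IZ}\IM_2$; equivalently, $\cA$ admits an irreducible representation $\pi\colon\cA\to\IB(\cH)$ on a separable infinite-dimensional Hilbert space $\cH$ with $\pi(\cA)\cap\IK(\cH)=\{0\}$. Using Glimm's approximate-matrix-unit construction inside $\cB$, I would produce in $\cA$ a sequence of positive contractions $(a_n)_{n\in\IZ}$ concentrated at distinct tensor sites of the emerging CAR algebra, together with a unitary $u\in\cA$ satisfying $\Ad_u(a_n)=a_{n+1}$, and a state $\phi$ on $\cA$ modelled on a Bernoulli product vacuum state of $\cM_{2^\infty}$ for which the mixed moments $\phi(a_{m_1}\cdots a_{m_k})$ tend to $0$ as $\max_i m_i\to\infty$.

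With these data in hand, set $\cA_0$ to be the hereditary sub-C*-algebra of $\cA$ generated by $\{a_n:n\in\IZ\}$, arranged to be non-unital by choosing the spectral condition $0\in\sigma(a_0)$; it is $\Ad_u$-invariant by construction. Bernoulli independence of $\phi$ along the orbit $(a_n)$ immediately gives $\phi\circ\Ad_u^n\to 0$ on the dense $*$-subalgebra generated by the $a_n$, and a routine density/uniform-boundedness argument extends this to all of $\cA_0$. Proposition~\ref{prop:nonzerobc} then applies and yields $\bc(\cA,\Ad_u)\neq 0$.

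The main obstacle is that the bilateral tensor shift of the CAR algebra is an \emph{outer} automorphism of $\cM_{2^\infty}$, so no single unitary in $\cM_{2^\infty}$ itself implements the shift on all tensor factors simultaneously. The key observation is that Proposition~\ref{prop:nonzerobc} only requires the shift structure to exist on one distinguished orbit of \emph{localized} elements; such a partial shift is implementable by an inner automorphism of $\cA$, which is strictly larger than $\cM_{2^\infty}$ since the latter arises only as a quotient of a subalgebra of $\cA$. Producing $u$, the orbit $(a_n)$, and the shift-compatible state $\phi$ simultaneously is a careful Bratteli-diagram/Rokhlin-style construction inside the matrix-unit system supplied by Glimm's theorem, and this is the technical heart of the argument.
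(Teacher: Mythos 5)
Your reduction to Proposition~\ref{prop:nonzerobc} and your choice of guiding model (a shift mixing a distinguished non-unital invariant subalgebra to zero) match the paper, but the proposal has an essential gap: the existence of the unitary $u\in\cA$, the orbit $(a_n)$ with $\Ad_u(a_n)=a_{n+1}$, and the compatible state are exactly the hard part of the lemma, and you defer all of it to ``a careful Bratteli-diagram/Rokhlin-style construction'' that you never carry out. Worse, the route you sketch is doubtful as stated. Glimm's theorem only hands you a UHF subquotient $\cB/\cJ\cong\cM_{2^\infty}$, whose tensor shift is an outer automorphism, and your remedy --- that the ``partial shift'' becomes inner because $\cA$ is strictly larger --- is unsupported: $\cA$ could itself be the CAR algebra, in which case it is not larger in any useful sense, and nothing in the Glimm matrix-unit data produces a single unitary of $\cA$ conjugating a prescribed sequence of lifted, localized positive contractions one step along. (There is also a smaller unchecked point: Proposition~\ref{prop:nonzerobc} needs $\phi$ to be a \emph{state on} $\cA_0$, i.e.\ of norm one there, which a Bernoulli-type state with vanishing moments along the orbit does not automatically give.)

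The paper resolves precisely this difficulty by a different mechanism, which your proposal is missing. One first builds a concrete separable nuclear model $\cC\subset\IB(\ell_2(\IZ))$ generated by the compacts $\IK$ and a selfadjoint $h$ with $\exp(\sqrt{-1}h)=z$, the bilateral shift; in this model the shift is inner, $\IK$ is the non-unital invariant subalgebra, and the vector state at $\delta_0$ mixes to zero under $\Ad_z$. Then, since $\cA$ is separable and not of type I, Kirchberg's theorem combined with Glimm's theorem (Corollary~1.4(vii) of \cite{kirchberg}) realizes the nuclear algebra $\cC$ as a quotient of a unital subalgebra $\cA_1\subseteq\cA$ via a surjection $\pi$. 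The unitary is then obtained by the logarithm-lifting trick: lift $h$ to a selfadjoint $g\in\cA_1$ (selfadjoint elements always lift through surjections, whereas unitaries need not) and set $u=\exp(\sqrt{-1}g)$, a genuine unitary of $\cA$ covering $z$. Pulling back, $\cA_0=\pi^{-1}(\IK)$ is $\Ad_u$-invariant and $\phi=\phi_0\circ\pi$ satisfies the hypotheses of Proposition~\ref{prop:nonzerobc}, giving $\bc(\cA,\Ad_u)\neq0$. The subquotient realization of the model algebra (which needs Kirchberg, not just Glimm, since $\cC$ is not UHF) and the lifting of the logarithm are the two ideas your argument lacks, and without them the construction of $u$ does not go through.
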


\begin{proof}
Let $z$ be the bilateral shift on $\ell_2(\IZ)$ and take a selfadjoint element $h\in\IB(\ell_2(\IZ))$
such that $z=\exp(\sqrt{-1}h)$. Let $\cC\subset\IB(\ell_2(\IZ))$ be the unital $\mathrm{C}^*$-sub\-alge\-bra
generated  by $\IK$ and $h$, and let $\phi_0$ be the vector state at $\delta_0$.
Since $\cC$ is an extension of a commutative $\mathrm{C}^*$-alge\-bra by $\IK$, it is nuclear.
By Kirchberg's theorem and Glimm's theorem in tandem (Corollary 1.4(vii) in \cite{kirchberg}),
there are a unital $\mathrm{C}^*$-sub\-alge\-bra
$\cA_1$ of $\cA$ and a surjective $*$-homo\-mor\-phism $\pi$ from $\cA_1$ onto $\cC$.
Let $g\in\cA_1$ be a selfadjoint lift of $h$ and let $u:=\exp(\sqrt{-1}g)\in\cA_1$,
which is a unitary lift of $z$.
Then $\cA_0=\pi^{-1}(\IK)$ is an $\Ad_u$-invariant subalgebra
and the state $\phi=\phi_0\circ\pi$ satisfies $\phi\circ(\Ad_u)^n\to0$ pointwise on $\cA_0$.
Hence the result follows from Proposition~\ref{prop:nonzerobc}.
\end{proof}

Combining Lemma~\ref{lem:unbc} and Lemma~\ref{lem:der}, we arrive at the following theorem.

\begin{thm}
For every unital separable $\mathrm{C}^*$-alge\-bra $\cA$ which is not of type $\mathrm{I}$,
there is a uniformly bounded representation of $\IZ$ into $\IM_2(\cA)$ which is not unitarizable.
\end{thm}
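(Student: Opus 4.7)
The plan is to combine Lemma~\ref{lem:unbc} and Lemma~\ref{lem:der} almost mechanically, since the two lemmas have been arranged so that they dovetail: the first produces, for any non-type-I separable unital $\cA$, a unitary $u\in\cA$ with $\bc(\cA,\Ad_u)\ne 0$, and the second converts any non-inner cocycle for an inner automorphism into a non-unitarizable uniformly bounded $\IZ$-representation into $\IM_2(\cA)$.

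First I would apply Lemma~\ref{lem:unbc} to the given $\cA$, obtaining a unitary $u\in\cA$ such that $\bc(\cA,\Ad_u)\ne 0$. By the very definition of $\bc$, this means that there exists a cocycle $a\in\cA$ for the automorphism $\theta=\Ad_u$, i.e.\ an element with $\tn{a}=\sup_{n\ge 1}\|\sum_{k=0}^{n-1}\theta^k(a)\|<\infty$, which is \emph{not} inner: there is no $x\in\cA$ with $a=x-\theta(x)$.

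Next I would feed this data $(\cA,u,a)$ into the construction preceding Lemma~\ref{lem:der}. That is, consider
\[
t=\begin{pmatrix} u & au\\ 0 & u\end{pmatrix}\in\IM_2(\cA),
\]
which is invertible with powers $t^n=\begin{pmatrix} u^n & a_n u^n\\ 0 & u^n\end{pmatrix}$, where $a_n=\sum_{k=0}^{n-1}\theta^k(a)$. The cocycle bound $\tn{a}<\infty$ gives $\sup_{n\in\IZ}\|t^n\|\le 1+\tn{a}<\infty$, so the assignment $n\mapsto t^n$ defines a uniformly bounded representation $\pi_a\colon\IZ\to\IM_2(\cA)$. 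Finally, Lemma~\ref{lem:der} tells us that $\pi_a$ is unitarizable if and only if $a$ is inner; since $a$ was chosen not to be inner, $\pi_a$ is not unitarizable, which is exactly what we wanted.

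There is essentially no obstacle here: all the work has been done in the preceding lemmas, and the only role of this statement is to package Lemma~\ref{lem:unbc} and Lemma~\ref{lem:der} into a single clean conclusion. If anything, the only thing worth double-checking is that the hypothesis of Lemma~\ref{lem:der} (that $\theta$ be inner, implemented by a \emph{unitary} $u$ in $\cA$) matches what Lemma~\ref{lem:unbc} delivers, but Lemma~\ref{lem:unbc} is stated precisely so as to produce a unitary $u\in\cA$ with $\theta=\Ad_u$, so the two hypotheses align without adjustment.
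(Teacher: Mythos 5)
Your proposal is correct and is exactly the paper's argument: the paper derives this theorem in one line by combining Lemma~\ref{lem:unbc} (producing a unitary $u$ with $\bc(\cA,\Ad_u)\neq 0$) with Lemma~\ref{lem:der} applied to a non-inner cocycle, just as you do. Your check that the two lemmas' hypotheses align is the only point needing attention, and you handled it correctly.
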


Now, we shall deal with non-separable $\mathrm{C}^*$-alge\-bras.
Our approach uses model theory of metric structures
and the extension of Pedersen's techniques \cite{Pede:Corona} as presented in \cite{fh}.
The following is Definition~1.1 from \cite{fh}, with a misleading typo corrected.

\begin{defn}
Given a $\mathrm{C}^*$-alge\-bra $\cM$, a \emph{degree-$1$ *-polynomial}
with coefficients in~$\cM$
is a linear combination of terms of the form $ax b$, $a x^* b$ and $a$ with $a,b$ in $\cM$.
A $\mathrm{C}^*$-alge\-bra $\cM$ is said to be \emph{countably degree-$1$ saturated}
if for every countable family of degree-$1$ $*$-poly\-no\-mials $P_n(\bar{x})$
with coefficients in $\cM$ and variables $x_m$, for $m\in\IN$, and every family of compact sets
$K_n\subset\IR$, for $n\in\IN$, the following are equivalent
(writing $\bar{b}$ for $(b_1,b_2,\ldots)$ and $\cM_{\le1}$ for the closed unit ball of $\cM$).
\begin{enumerate}
\item There are $b_m\in\cM_{\le1}$, for $m\in\IN$, such that $\|P_n(\bar{b})\|\in K_n$ for all $n$.
\item For every $N\in\IN$ there are $b_m\in\cM_{\le1}$, for $m\in\IN$, such that
\[
\dist(\|P_n(\bar{b})\|, K_n)\le\frac{1}{N}
\]
for all $n\le N$.
\end{enumerate}
\end{defn}

A type $\{P_n(\bar x)\in K_n: n\in \IN\}$ satisfying (1) is said to be \emph{realized} in $\cM$ and a type satisfying (2) is said to be
\emph{consistent}
with (or \emph{approximately finitely realized in})  $\cM$.
Coronas of $\sigma$-unital $\mathrm{C}^*$-algebras, in particular
the Calkin algebra $\cQ(\ell_2)$ and $\cC(\IN)\otimes\IM_2$,  as well
as ultraproducts associated with nonprincipal ultrafilters on~$\IN$,
are countably degree-$1$ saturated
(\cite[Theorem~1.4]{fh}).
In each of these cases, given a consistent type, a realization $\bar b$ is assembled from the approximate realizations
$\bar b^n$, for $n\in \IN$ and a carefully chosen, appropriately
quasicentral approximate unit $e_n$, for $n\in \IN$, as $\bar b=\sum_n (e_n-e_{n+1})^{1/2} \bar b^n (e_n-e_{n+1})^{1/2}$.
See \cite{fh} for details and
more examples of countably degree-$1$ saturated $\mathrm{C}^*$-alge\-bras.

\begin{thm}\label{thm:cd1s}
Let $\cM$ be a unital countably degree-$1$ saturated $\mathrm{C}^*$-alge\-bra.
Then, every uniformly bounded representation $\pi\colon\G\to\cM$ of
a countable amenable group~$\G$ into $\cM$ is unitarizable. Moreover a similarity
element $v$ can be chosen so that it satisfies $\|v\| \|v^{-1}\|\le\|\pi\|^2$.
\end{thm}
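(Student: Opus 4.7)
The plan is to adapt the classical Day--Dixmier averaging argument, which produces a positive invertible $h\in\cM$ with $\pi(s)^*h\pi(s)=h$ for every $s\in\G$, but to substitute countable degree-$1$ saturation for the weak-$*$ compactness used in the von Neumann algebra case. Once such an $h$ is found with spectrum in $[\|\pi\|^{-2},\|\pi\|^2]$, setting $v:=h^{1/2}$ will finish the proof: one then has $\|v\|\,\|v^{-1}\|\le\|\pi\|^2$, and the invariance rewrites as $(v\pi(s)v^{-1})^*(v\pi(s)v^{-1})=1$; since $v\pi(s)v^{-1}$ is invertible, it must be unitary.

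The first step is to phrase the existence of $h$ as a countable type in the language of $\cM$. Let $M:=\frac{1}{2}(\|\pi\|^2-\|\pi\|^{-2})$ and $c:=\frac{1}{2}(\|\pi\|^2+\|\pi\|^{-2})$, and parameterize $h=Mx+c\cdot 1$; the conditions $x=x^*$ and $\|x\|\le 1$ then translate exactly to $h=h^*$ together with $\|\pi\|^{-2}\cdot 1\le h\le\|\pi\|^2\cdot 1$. The invariance for each $s\in\G$ becomes
\[
P_s(x):=M\bigl(\pi(s)^*x\pi(s)-x\bigr)+c\bigl(\pi(s)^*\pi(s)-1\bigr)=0.
\]
Each $P_s$ is a degree-$1$ $*$-polynomial in $x$ with coefficients in $\cM$, and so is $x-x^*$. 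Since $\G$ is countable, this yields a countable type with each norm constrained to the compact set $\{0\}$ and with the single variable confined to $\cM_{\le1}$, exactly as required by the definition of countable degree-$1$ saturation.

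The second step is to show this type is approximately finitely realized in $\cM$, so that saturation will provide an exact realization. Given any finite $F_0\subset\G$ and $\delta>0$, amenability of $\G$ furnishes a finite $F\subset\G$ satisfying $|Fs\triangle F|<\delta|F|$ for every $s\in F_0$, and I set
\[
h_F:=\frac{1}{|F|}\sum_{t\in F}\pi(t)^*\pi(t),\qquad x_F:=(h_F-c\cdot 1)/M.
\]
Each summand lies between $\|\pi\|^{-2}\cdot 1$ and $\|\pi\|^2\cdot 1$ (using $\|\pi(t)^{-1}\|=\|\pi(t^{-1})\|\le\|\pi\|$), so the same holds for $h_F$; hence $x_F=x_F^*$ with $\|x_F\|\le 1$. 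Rewriting $\pi(s)^*h_F\pi(s)$ as an average over $Fs$ now gives $\|\pi(s)^*h_F\pi(s)-h_F\|\le\|\pi\|^2\cdot|Fs\triangle F|/|F|<\delta\|\pi\|^2$ for $s\in F_0$, hence $\|P_s(x_F)\|<\delta\|\pi\|^2$, and $\delta$ may be chosen arbitrarily small.

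Invoking countable degree-$1$ saturation then yields $x\in\cM_{\le1}$ with $x=x^*$ and $P_s(x)=0$ for every $s\in\G$, after which $h:=Mx+c\cdot 1$ and $v:=h^{1/2}$ are the required similarity element. The only subtle point is the translation of the operator-theoretic constraints $\|\pi\|^{-2}\cdot 1\le h\le\|\pi\|^2\cdot 1$ into the degree-$1$ framework; this is exactly what the affine reparameterization $x\mapsto Mx+c$ accomplishes, since for a self-adjoint $x$ the two operator inequalities collapse into the single norm bound $\|x\|\le 1$ that is already built into the saturation axiom.
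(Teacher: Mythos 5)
Your proposal is correct and follows essentially the same route as the paper: encode the Day--Dixmier averaging as a countable degree-$1$ type (self-adjointness, spectral bounds in $[\|\pi\|^{-2},\|\pi\|^2]$, and $\G$-invariance), verify consistency with F{\o}lner averages of $\pi(t)\pi(t)^*$ (resp.\ $\pi(t)^*\pi(t)$), realize it by saturation, and conjugate by $h^{\pm 1/2}$. The only differences are cosmetic: you enforce the spectral window by the affine reparameterization $h=Mx+c$ with $\|x\|\le 1$, whereas the paper uses the condition $\|\,\|\pi\|^2-x\,\|\le\|\pi\|^2-\|\pi\|^{-2}$, and you use the left-invariant convention $\pi(s)^*h\pi(s)=h$ instead of the paper's $\pi(s)h\pi(s)^*=h$.
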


\begin{proof}
The proof is analogous to the standard one (see Theorem 0.6 in \cite{pisier}),
modulo applying countable degree-$1$ saturation.
Consider  the type in variable $x$
over $\cM$ consisting of conditions $\|x-x^*\|=0$, $\|x\|\leq \|\pi\|^2$, $\|\hspace{.5pt}\|\pi\|^2-x\| \le\|\pi\|^2-\|\pi\|^{-2}$,
and $\| \pi(s) x \pi(s)^* - x \|=0$ for all $s\in \G$.

We now check that this type is consistent.
Let $(F_n)_{n=1}^\infty$ be a F{\o}lner sequence of finite subsets of $\G$.
Then,
\[
h_n=\frac{1}{|F_n|}\sum_{t\in F_n}\pi(t)\pi(t)^*,
\]
are positive elements in $\cM$ such that $\|\pi\|^{-2}\le h_n \le\|\pi\|^2$
and
\[
\| \pi(s) h_n \pi(s)^* - h_n \|\le \frac{|F_n\bigtriangleup sF_n|}{|F_n|}\|\pi\|^2 \to 0
\]
for every $s\in\G$. Hence this type is consistent and  by countable degree-$1$ saturation there is
$h\in\cM$ which realizes it. Therefore we have
$h=h^*$, $\|h\|\le\|\pi\|^2$, $\|\hspace{.5pt}\|\pi\|^2-h \|\le \|\pi\|^2-\|\pi\|^{-2}$, and
$\pi(s) h \pi(s)^* = h $ for every $s\in\G$.
It follows that $h$ is a positive element such that $\|\pi\|^{-2}\le h \le\|\pi\|^2$
and the invertible elements $h^{-1/2}\pi(s)h^{1/2}$ satisfy
\[
(h^{-1/2}\pi(s)h^{1/2})(h^{-1/2}\pi(s)h^{1/2})^*=h^{-1/2}\pi(s)h\pi(s)^*h^{-1/2}=1,
\]
i.e., $h^{-1/2}\pi(s)h^{1/2}$ are unitary.
\end{proof}

Theorem~\ref{thm:cd1s} shows that the method used in the proof of
Theorem~\ref{thm:main} cannot be used to produce a separable counterexample.

\appendix
\section{A correction for \cite{choi}}
We take the opportunity to fill a small gap in \cite{choi}.
The main result of that paper is only proved for commutative amenable subalgebras of
$\sigma$-finite finite von Neumann algebras. It is then stated in \cite{choi} that the general
case follows from the $\sigma$-finite one because any finite von Neumann algebra $\cM$
decomposes as a direct product $\prod_i \cM_i$ where each $\cM_i$ is $\sigma$-finite.
However, the example of the present paper shows that similarity to a $\mathrm{C}^*$-algebra
is not preserved by taking inductive limits, even with a uniform bound on the similarity elements, 
so more justification is needed. Instead, we may
argue as follows. Let $\cA$ be an amenable subalgebra of $\cM$ and let $\cA_i$ be its
image under the projection $\cM\to\cM_i$. Applying the main result of \cite{choi} to
each $\cA_i$, we obtain a uniformly bounded family $v_i\in\cM_i$ such that
$v_i \cA_i v_i^{-1}$ is a commutative $\mathrm{C}^*$-sub\-alge\-bra of~$\cM_i$.
Take $v$ to be the direct product of the $v_i$. Then $v\cA v^{-1}$ is an amenable
subalgebra of the commutative $\mathrm{C}^*$-alge\-bra
  $\prod_i v_i \cA_i v_i^{-1}$, and hence by \cite{sheinberg}
it is self-adjoint.

\section{A construction of Luzin's gap}\label{sec:luzin}
For the reader's convenience we prove Luzin's theorem.
Following von Neumann, we identify $n\in \IN$ with the set $\{0,1,\dots, n-1\}$.
We construct a family $E_i$, for $i<\aleph_1$, of infinite subsets of $\IN$
such that
\begin{enumerate}
\item $E_i\cap E_j$ is finite whenever $i\neq j$, and
\item for every $i$ and every $m\in \IN$ the set
$\{j<i: E_j\cap E_i\subseteq m\}$ is finite.
\end{enumerate}
The construction is by recursion.
For a finite  $i$ let $E_i=\{2^i(2k+1):k\in \IN\}$.
Assume  $i<\aleph_1$ is infinite  and the sets $E_j$, for $j<i$ were chosen to satisfy the
requirements.
Since $i$ is countable, we can re-enumerate $E_j$, for $j<i$ as $F_n$, for $n\in \IN$.

Now let $k(0)=0$ and $k(n)=\min F_n\setminus (k(n-1)\cup \bigcup_{l<n} F_l)$ for $n\geq 1$.
The sequence $\{k(n)\}$ is strictly increasing and $k(n)\in F_l$ implies $n\leq l$.
 Therefore $E_i=\{k(n): n\in \IN\}$  is infinite
 and $E_i\cap F_n\subseteq \{k(0), \dots, k(n)\}$ is finite for all $n$.
 Finally, for any $m\in \IN$ the set $\{n\in \IN: F_n\cap E_i\subseteq m\}
 \subseteq \{n: k(n)<m\}$ is finite.

 This describes  the recursive construction of a family $E_i$, for $i<\aleph_1$, satisfying (1) and (2).

 We claim that for any $X\subseteq \aleph_1$ such that $X$ and $\aleph_1\setminus X$ are uncountable
 the families $\{E_i: i\in X\}$ and $\{E_i: i\in \aleph_1\setminus X\}$ cannot be separated.
  Assume otherwise, and fix $F\subseteq \IN$ separating them.
  Since $E_i\setminus F$ is finite for all $i\in X$, there is $m\in \IN$ such that
  $X'=\{i\in X: E_i\setminus F\subseteq m\}$ is uncountable.
  By increasing $m$ if
  necessary we can assure that $Y'=\{i\in \aleph_1\setminus X: E_i\cap F\subseteq m\}$
  is also uncountable.

  Pick $i\in Y'$ such that $X''=\{j\in X': j<i\}$ is infinite.
Then for each $j\in X''$ we have    $E_j\cap E_i\subseteq (E_j\setminus F)\cup (E_i\cap F)
\subseteq m$. But this contradicts (2).


\begin{thebibliography}{MO10}
%
\bibitem[BL04]{bl} D. P. Blecher and C. Le Merdy;
\emph{Operator algebras and their modules---an operator space approach.}
London Mathematical Society Monographs. New Series, 30.
Oxford Science Publications. The Clarendon Press, Oxford University Press, Oxford, 2004.
%
%
\bibitem[Ch13]{choi} Y. Choi;
\emph{On commutative, operator amenable subalgebras of finite von Neumann algebras.}
J. Reine Angew. Math. \textbf{678} (2013), 201--222.
%
\bibitem[Co78]{connes} A. Connes;
\emph{On the cohomology of operator algebras.}
J. Funct. Anal. \textbf{28} (1978), 248--253.
%
\bibitem[CL95]{cl} P. C. Curtis Jr., and R. J. Loy;
\emph{A note on amenable algebras of operators.}
Bull. Austral. Math. Soc. \textbf{52} (1995), 327--329.
%
\bibitem[FH13]{fh} I. Farah and B. Hart;
\emph{Countable saturation of corona algebras.}
C. R. Math. Rep. Acad. Sci. Canada \textbf{35} (2013), 35--56.
%
\bibitem[Gi06]{gifford} J. A. Gifford;
\emph{Operator algebras with a reduction property.}
J. Aust. Math. Soc. \textbf{80} (2006), 297--315.
%
\bibitem[Ha83]{haagerup} U. Haagerup;
\emph{All nuclear $\mathrm{C}^*$-algebras are amenable.}
Invent. Math. \textbf{74} (1983), 305--319.
%
\bibitem[Jo72]{johnson} B. E. Johnson;
\emph{Cohomology in Banach algebras.}
Memoirs of the American Mathematical Society, No. 127.
American Mathematical Society, Providence, R.I., 1972.
%
\bibitem[Ki95]{kirchberg} E. Kirchberg;
\emph{On subalgebras of the CAR-algebra.}
J. Funct. Anal. \textbf{129} (1995), 35--63.
%
\bibitem[Lu47]{Lu47} {\cyr N.}~{\cyr Luzin};
\emph{{\itcyr O chastyah naturalp1nogo ryada}},
{\cyr Izv. AN SSSR, seriya mat.} \textbf{11, {\cyr N05}} (1947), 714--722. Available at\\
 http://www.mathnet.ru/links/55625359125306fbfba1a9a6f07523a0/im3005.pdf
%
\bibitem[Mo01]{monod2001continuous} N. Monod; 
\emph{Continuous bounded cohomology of locally compact groups},
  no. 1758, Springer, 2001.
%
\bibitem[MO10]{mo} N. Monod and N. Ozawa;
\emph{The Dixmier problem, lamplighters and Burnside groups.}
J. Funct. Anal. \textbf{258} (2010), 255--259.
%
\bibitem[Or00]{ormes} N. S. Ormes;
\emph{Real coboundaries for minimal Cantor systems.}
Pacific J. Math. \textbf{195} (2000), 453--476.
%

\bibitem[Pe88]{Pede:Corona} G.~K. Pedersen; 
\emph{The corona construction}, Operator {T}heory:
  {P}roceedings of the 1988 {GPOTS}-{W}abash {C}onference ({I}ndianapolis,
  {IN}, 1988), Pitman Res. Notes Math. Ser., vol. 225, Longman Sci. Tech.,
  Harlow, 1990, pp.~49--92.
%
\bibitem[Pi01]{pisier} G. Pisier;
\emph{Similarity problems and completely bounded maps.}
Second, expanded edition. Includes the solution to ``The Halmos problem".
Lecture Notes in Mathematics, 1618. Springer-Verlag, Berlin, 2001.
%
\bibitem[Pi07]{pisier07} G. Pisier;
\emph{Simultaneous similarity, bounded generation and amenability.}
Tohoku Math. J. (2) \textbf{59} (2007), 79--99.
%
\bibitem[Ru02]{runde} V. Runde;
\emph{Lectures on amenability.}
Lecture Notes in Mathematics, 1774. Springer-Verlag, Berlin, 2002.
%
\bibitem[{\v S}e77]{sheinberg} M. V. {\v{S}}e{\u\i}nberg;
\emph{A characterization of the algebra $C(\Omega)$ in terms of cohomology groups.}
Uspehi Mat. Nauk \textbf{32} (1977), 203--204.
%

\bibitem[Ta95]{Talayco} D.~E. Talayco; 
\emph{Applications of {C}ohomology to {S}et {T}heory {I}:
  {H}ausdorff gaps}, Annals of Pure and Applied Logic \textbf{71} (1995),
  69--106.

\bibitem[Vo91]{voiculescu} D. Voiculescu;
\emph{A note on quasi-diagonal $\mathrm{C}^*$-algebras and homotopy.}
Duke Math. J. \textbf{62} (1991), 267--271.

\bibitem[Wi95]{willis} G. A. Willis;
\emph{When the algebra generated by an operator is amenable.}
J. Operator Theory \textbf{34} (1995), 239--249.

\end{thebibliography}
\end{document}